\newcommand{\szego}{Szeg\H{o}\;}
\newcommand{\kahler}{K\"ahler }
\newcommand{\C}{{\mathbb C}}
\newcommand{\dbar}{\bar\partial}
\newcommand{\ddbar}{\partial\dbar}
\newcommand{\D}{{\mathbf D}}
\newcommand{\half}{{\frac{1}{2}}}
\newcommand{\hcal}{\mathcal{H}}
\newcommand{\lcal}{\mathcal{L}}
\newcommand{\al}{\alpha}
\newcommand{\la}{\lambda}
\newcommand{\pa}{\partial}
\newtheorem{maintheo}{{\sc Theorem}}
\newtheorem{theo}{{\sc Theorem}}[section]
\newtheorem{lem}[maintheo]{{\sc Lemma}}
\newenvironment{defin-no-number}{\medskip\noindent{\it Definition:\/} }{\medskip}
\title[Szeg\H o kernels and Poincar\'e series]{Szeg\H {o}  kernels and Poincar\'e series}
 \author{Zhiqin Lu} \address{Department of
Mathematics, University of California,
Irvine, Irvine, CA 92697, USA} \email[Zhiqin Lu]{zlu@uci.edu}
\author{Steve Zelditch}
\address{Department of Mathematics, Northwestern  University,
Evanston, IL 60208-2370, USA} \email{
zelditch@math.northwestern.edu}
\thanks{The first author is partially supported by  NSF grant \# DMS-12-06748 and the second author is partially supported by NSF grant  \#  DMS-1206527.}
\keywords{Szeg\H o kernel, Poincar\'e series, Agmon estimate}
\date{August 12, 2013}
\begin{document}

\begin{abstract} Let $M = \tilde{M}/\Gamma$ be a \kahler manifold, where $\Gamma \simeq \pi_1(M)$
and where $\tilde{M}$ is the universal \kahler cover. Let $(L, h) \to M$ be a positive Hermitian 
holomorophic line bundle. We first prove that the $L^2$ \szego
projector $\tilde{\Pi}_N$ for $L^2$-holomorphic sections on the lifted bundle $\tilde{L} ^N\to \tilde{M}$ is 
related to the \szego projector for $H^0(M, L^N)$ by  $\hat{\Pi}_N(x, y) = \sum_{\gamma \in \Gamma} \tilde{\hat{\Pi}}_N(\gamma \cdot
x, y). $ We apply this result to give a simple proof of Napier's theorem on the holomorphic convexity
of $\tilde{M}$ with respect to $\tilde{L}^N$ and to surjectivity of Poincar\'e series.
\end{abstract}

\maketitle

 Let $(M, \omega)$ denote a compact \kahler manifold of dimension $m$, and let
$(\tilde{M}, \tilde{\omega})$ denote its universal \kahler cover
with deck transformation group $\Gamma=\pi_1(M)$. 
We assume that $\Gamma$ is an infinite group so that $\tilde M$ is complete noncompact. 
Let $(L, h) \to
(M,\omega)$ denote a positive hermitian  line bundle  and let
$(\tilde{L}, \tilde{h})$ be  the induced hermitian line  bundle
over $\tilde{M}$.  The  first purpose of this note is to prove that  for sufficiently large $N \geq N_0(M, L, h)$, the 
 \szego kernel\footnote{In the context of positive
line bundles, the  \szego kernel and Bergman kernel are essentially the same and we use the two terms
interchangeably.}   of the holomorphic projection
$\Pi_{h^N}: L^2(M, L^N) \to H^0(M, L^N)$ on the quotient is given by the Poincar\'e series
of the  \szego projection 
for $L^2$ holomorphic sections on the universal cover (Theorem \ref{UPSZEGON}). This relation is standard in the theory of
the Selberg trace formula on locally symmetric spaces, but seems not to have been proved before in the
general setting of positive line bundles over \kahler manifolds. As will be seen, it is a consequence
of  standard Agmon estimates on off-diagonal decay of the \szego kernel  \cite{Del,L} and  of the local structure
of the kernel given by the  Boutet de Monvel-Sj\"ostrand parametrix for both Szeg\H o kernels \cite{BouSj, BBSj}.
This relation is then used   to simplify and unify a number of
results on universal covers of compact \kahler manifold.  One application is  a short proof of
the holomorphic convexity with respect to the positive line bundle $(\tilde{L}, \tilde{h})$ (Theorem \ref{N})  proved by  T. Napier  \cite{N}.
A second application is a simple  proof of surjectivity of Poincar\'e series (Theorem \ref{POINCARE}).  The  problem of determining the least $N_0(M, L, h)$ for which these results are true is not
treated in this article.

To state the results, we need to introduce some notations.
For any positive hermitian line bundle $(L, h) \to (M,\omega)$ over a
\kahler manifold, we denote by $H^0(M, L^N)$ the space of
holomorphic sections of the $N$-th power of $L$.  We assume throughout that
$\omega : = -\frac{i}{\pi} \ddbar \log h$ is a \kahler metric. The Hermitian metric $h$ induces  the
inner products
\begin{equation} \langle s_1, s_2 \rangle_{h^N} = \int_{M}
(s_1(z), \overline{s_2(z)})_{h^N} dV_{M}, \end{equation} where the volume form\footnote{We sometimes also use $dV_M(y)$ instead of $dV_M$ if we want to specific a variable $y$.}
is given by $dV_{M} = \omega^m/m!$. The corresponding inner product for $\tilde h$ can be defined in the similar way. We also use 
$|\cdot|_{h^N}$ and $|\cdot|_{\tilde h^N}$ to denote the pointwise norms of the metrics $h$ and $\tilde h$, respectively. 
We use the notation
 $ H^0_{L^2}(\tilde{M}, \tilde{L}^N)$ for the space of
$L^2$ holomorphic sections.  More generally, we  denote
the space of $L^p$ holomorphic sections by  $ H^0_{L^p}(\tilde{M}, \tilde{L}^N)$ for $p\geq 1$.
We further  denote by
\begin{equation} \label{PIN}  \Pi_{h^N} : L^2(M, L^N) \to H^0(M, L^N)
\end{equation}
the orthogonal projection or the \szego kernel with respect to
$\langle\,\,,\, \,\rangle_{h^N}$ and by
\begin{equation} \tilde{\Pi}_{h^N} : L^2(\tilde{M}, \tilde{L}^N) \to
 H^0_{L^2}(\tilde{M}, \tilde{L}^N)
\end{equation}
the corresponding  orthogonal projection or the   \szego kernel on the universal
cover $\tilde M$. Since $h$ is fixed in this discussion we often simplify the notation by
writing $\Pi_N$ and $\tilde{\Pi}_N$.  

It has been proved by  Delin  \cite{Del},
Lindholm \cite{L} and Christ \cite{Ch} 
in various settings that the \szego kernels admit the following 
{\em Agmon} estimate: there exists $\beta= \beta(\tilde M, \tilde L, \tilde h)>0$ such that
\begin{equation}\label{AGMON}  |\tilde{\Pi}_N(x, y)|_{\tilde{h}^N}  \leq e^{- \beta
\sqrt{N} d(x, y)} \end{equation}
for $d(x,y)\geq 1$, where $d(x, y)$ denotes the distance function of $\tilde M$.  We use this relation
to study the Poincar\'e series map

\begin{equation} \label{P} P : H^0_{L^1}(\tilde{M}, \tilde{L^N}) \to  H^0(M, L^N), \;\;\;\;\;  P
f(z) := \sum_{\gamma \in \Gamma} f(\gamma\cdot z)  \end{equation}
for any  $N>0$.


Each deck transformation $\gamma \in \Gamma$ determines a displacement function
$d_{\gamma}(x) = d(x, \gamma x)$ on $\tilde{M}$. Its minimum value  is denoted $L(\gamma)$. The minimum set
is the axis of $\gamma$. When it is of dimension one,  it folds up under $\Gamma_{\gamma}$, the
centralizer of $\gamma$ in $\Gamma$, to a closed geodesic and $L(\gamma)$ is its length; in degenerate cases, it
 equals the common  length of the closed geodesics (cf. \cite[page 95]{ec}).  

 We denote by $\ell_1$ the minimum over $\gamma \in \Gamma$ of $L(\gamma)$, i.e. 
the length of the shortest closed geodesic of $(M, \omega)$. 

The main result of this paper is the following $\sum_\gamma$ relation.

\begin{maintheo} \label{UPSZEGON} 
There is an integer  $N_0=N_0(M,L,h)$, such that if $N\geq N_0$, then 
the degree $N$ \szego kernel $\Pi_N(x,y)$ of $(L, h) \to M$ and  $\tilde\Pi_N(x,y)$ of $(\tilde{L} ,\tilde{h})\to \tilde{M}$ are related by
$${\Pi}_N(x, y) = \sum_{\gamma \in \Gamma} \tilde{{\Pi}}_N(\gamma \cdot
x, y). $$
\end{maintheo}

 There is a classical proof of Theorem \ref{UPSZEGON}
(due to Selberg, Godement and Earle \cite{Ear})
for  a bounded symmetric domain,  which proves the result under  the additional assumption on the variation of the Bergman kernel. We also note \footnote{More generally, the relation of solutions of elliptic equations on both $M$  and $\tilde M$ (cf. ~\cite{A}).}  that the analogue of the  $\sum_{\gamma}$ relation   
of  Theorem \ref{UPSZEGON}   for heat kernel, or for the wave kernel $\cos t \sqrt{\Delta}$, is  simpler 
and standard.  Consider the heat kernel  $\tilde{{\hat{K}}}(t, x, y)$ for the heat operator $\exp (- t \Box_b)$ generated by the Kohn Laplacian $\Box_b$  associated to $(L, h)$. For
each $N$, let $\tilde{{\hat{K}}}_N(t, x, y)$ be the component of $\tilde{{\hat{K}}}(t, x, y)$ identified with  the Kodaira Laplacian of the line bundle $L^N$.  We easily see that
\begin{equation} \label{HEAT} {\hat K}_N(t, x, y) = \sum_{\gamma \in \Gamma} \tilde{{\hat{K}}}_N(t, \gamma \cdot
x, y).  \end{equation}  To prove \eqref{HEAT}, we just  note that both sides solve the heat  equation and that
they have the same initial condition, i.e. the  delta function.  In
the case of Szeg\"o kernels, it is also simple to see that both sides are holomorphic projectors. The same
argument can be used for the wave kernel, whenever the $\sum_{\gamma}$ is finite. But it is not
apriori clear that  the right side is a surjective projection onto $H^0(M, L^N)$. Surjectivity is a kind of replacement
for the initial condition in the case of the heat kernel, but  it is a more complicated kind of \emph{boundary condition}. 
As mentioned above, Theorem \ref{UPSZEGON} is proved by studying the singularity on the diagonal of the Szeg\H o kernel
using  the parametrix construction in \cite{BouSj,BBSj}.  The main idea is that the $\sum_{\gamma}$ relation must
hold because the $L^2$ Szeg\H o kernel has precisely the same local singularity as the quotient Szeg\H o kernel. 
In principle it is possible to estimate $\beta$ and $\ell_1$ and hence
to estimate the minimal power for which the relation is valid.

An alternative approach to  Theorem \ref{UPSZEGON}, which we do not carry out here,  is to use \eqref{HEAT}
and take  the limit $t \to \infty$. One  needs to take
the limit $t \to \infty$ under the summation sign $\sum_{\gamma}$ on the right side and show that one obtains
the Szeg\H o kernel part of each term. It seems that the limit $t \to \infty$ is monotone decreasing along the
diagonal $x = y$ so that the limit
may indeed  be taken under the $\sum_{\gamma}$ side.  One may then  use the existence of a spectral gap for the Kohn Laplacian both upstairs and downstairs  to
show that the limit of the left side of \eqref{HEAT} is the downstairs Szeg\"o kernel and the limit of each term
on the right side is the corresponding expression for the upstairs Szeg\"o kernel. However, we opted to work
entirely with the Szeg\"o kernel.

Our first application is to give a simple proof of the holomorphic
convexity of $\tilde{M}$ with respect to sufficiently high powers
of a positive line bundle. We recall that $\tilde{M}$ is called
holomorphically convex if for each sequence $\{x_n\}$ with no
convergent subsequence, there exists a holomorphic function $f$ on
$\tilde{M}$ such that $|f(x_n)|$ is unbounded. It is
holomorphically convex with respect to $\tilde{L}^N$ if there
exists $s \in H^0(\tilde{M}, \tilde{L}^N)$ such that
$|s(x_n)|_{{\tilde h^N}}$ is unbounded.

In \cite{N}, T. Napier proved a special case of the Shafarevich
conjecture, which states the holomorphic convexity of the universal cover of certain complex manifolds, and he also proved holomorphic convexity with respect to
high powers of a positive line bundle.
The recent development of the conjecture can be found in~\cite{EKPR}.
Note that holomorphic convexity  is much simpler to prove with the presence of high powers of a positive line bundle.
 In \S \ref{HC}, we give a
new proof of the following theorem:

 \begin{maintheo}[\cite{N}] \label{N}  Let $M$ be a connected smooth projective
variety and $L \to M$ a positive holomorphic line bundle. Then
there exists an integer $N_0=N_0(M,L,h)$ so that for $N \geq N_0$, the
universal cover $\tilde{M}$ is holomorphically convex with respect
to  $\tilde{L}^N$. \end{maintheo}

Our second application is the  surjectivity of
 the Poincar\'e map \eqref{P}.  In general, the operator $P$ is not surjective. We prove

\begin{maintheo} \label{POINCARE} Suppose that $N$ is large enough so that Theorem~\ref{UPSZEGON}  holds. Then the Poincar\'e
map is surjective from $H^0_{L^1}(\tilde{M}, \tilde{L}^N) \to H^0(M, L^N). $ \end{maintheo}

As discussed above, surjectivity is the non-trivial aspect of the $\sum_{\gamma}$ relation,  and as we show
in \S \ref{SURJ} it is an almost immediate consequence of  Theorem \ref{UPSZEGON}. The original motivation for this article
was to simplify the discussion of surjectivity in Kollar's book \cite{K}.  We briefly review its approach.

 In
\cite[Theorem 7.12]{K},  $P$ is proved to be surjective as long as
the Bergman kernel on $L^2$ extends to $L^1$ and $L^{\infty}$ and
is a reproducing kernel on $L^{\infty}$.   Koll\'ar reviews
two conditions (7.9) (Condition 1) and (7.11) (Condition 2) under which surjectivity
was proved by Earle \cite{Ear}.
Condition 1 is that the Bergman projection $\tilde{\Pi}_N$ for $(\tilde{M}, \tilde{h})$ extends
to bounded linear maps on $L^1(\tilde{M}, \tilde{L}^N)$ and  $L^{\infty} (\tilde{M}, \tilde{L}^N)$. 
As verified in \cite[Proposition 7.13]{K}, it is sufficient that $\tilde{\Pi}_N(\cdot, w) \in L^1(\tilde{M})$
with $\|\tilde{\Pi}_N(\cdot, w)\|_{L^1} \leq C$ for a uniform constant $C$ independent of $w$. Condition 
2 is that $\tilde{\Pi}_N$ is a reproducing kernel on all $L^{\infty}(\tilde{M}, \tilde{L}). $
The  Agmon estimates are sufficient to ensure Condition 1.   In \cite[Proposition 7.14]{K}, a rather
strange condition is used to prove Condition 2: namely that $\frac{\Pi_{2
N}(z,w)}{\Pi_N(z,w)}$ is in $L^2$. In particular, that
$\Pi_N(z,w)$ is never zero.
 Koll\'ar writes {\em the conditions in (7.14) are ....quite artificial; it
is not clear...how restrictive condition 2 is in reality}. Theorem \ref{UPSZEGON}  shows that the natural restriction
is  convergence of the Poincar\'e series in its statement.  We do not study in detail  the question of effective estimates,
i.e. of finding the minimal value of $N_0$, but the proof of Theorem \ref{UPSZEGON}  shows that $N_0$ is determined
by balancing the growth rate of $\Gamma$ with optimal off-diagonal estimates \eqref{AGMON} on $\Pi_N(x,y)$.
 In ~\cite{Y1,Y2,Y3}, Yeung proved some effectiveness results for towers of  Galois covers over a  \kahler  manifold.
His techniques may prove to be   useful in obtaining an effective estimate of $N_0$.  We hope to study this question in a future  article.

There exist many additional articles devoted to  universal covers of K\"ahler manifolds and the relations between
the complex geometry above and below. See for instance \cite{G1,G2,E,Kai,Ca,donn,Y1,Y2}.    But to our knowledge,  
they do not use the relation of  Theorem
\ref{UPSZEGON}.

\section{Bergman/\szego kernels}\label{s1}

In this section, we review the definition of the Bergman/\szego kernel for a positive Hermitian holomorphic
line bundle $(L,h) \to M$. We also go over a basic example where an explicit formula on the universal cover exists.

The \szego kernel of $(L, h) \to M$ is the Schwartz kernel of the orthogonal projection \eqref{PIN}. To
obtain a Schwartz kernel we need to introduce a local holomorphic frame $e_L$ over an open set $U \subset M$.
Then a local holomorphic section may be written $s = f e_L$ where $f $ is a local holomorphic function on $U$.
Similary, $e_L^{\otimes N}$ is a local holomorphic frame for $L^N$. 
We choose an orthonormal basis $\{S^N_j\}$  of $H^0(M, L^N)$ and write
$S^N_j=f_j e_L^{\otimes N}:j=1,\dots,d_N$ where $d_N = \dim H^0(M, L^N)$. 
Then the \szego kernel $\Pi_{N}(z,w)$ for $(L^N, h^N)$ relative to $dV_M$
is the section of $L^N \otimes \overline{L}^N \to M \times M$  given  by   \begin{equation}\label{FNdef}  \Pi_{N}(z,\bar w): = B_N
(z,\bar w)\,e_L^{\otimes N}(z) \otimes\overline {e_L^{\otimes
N}(w)}\,,\end{equation} where
\begin{equation}\label{FN}B_N(z,\bar w)=
\sum_{j=1}^{d_N}f_j(z) \overline{f_j(w)}\;.\end{equation}
In \cite{BBSj}, $B_N(z,w)$ is called the Bergman kernel.

 \subsection{ Szeg\H o kernel for line bundles and the associated disc bundle}

Instead of using local frames, one can define scalar kernels if one lifts
the \szego kernels  to the unit frame bundle
$X$ associated to the dual Hermitian line bundle $(L^*, h^*) \to M$ of $(L, h)$. The behavior of the lifts under translations by $\gamma \in \Gamma$ are somewhat  more   transparent than
for $\Pi_{h^N}(z,w)$ which is a section of $L^N \otimes \bar{L}^N$. The choice whether to use the 
kernels on $M \times M$ or their lifts to $X \times X$ is mainly a matter of convenience.  
 In this section, we review the lift
to the unit frame bundle on $M$ and $\tilde{M}$.

As above,
$L^*$ denotes the dual line bundle to $L$. The hermitian metric $h$ on $L$ induces the dual metric
$h^*(=h^{-1})$ on $L^*$.  We define the principal
$S^1$  bundle $\pi: X \to M$ by $$X=\{\la \in L^* : \|\la\|_{h^*}= 1\} = \partial D, \;\; \mbox{where}\;\; D  = \{\la \in L^* : \rho(\la)>0\}, $$
where
$\rho(\la)=1-\|\la\|^2_{h^*}$. We let $r_{\theta}x =e^{i\theta} x$
($x\in X$) denote the $S^1$ action on $X$ and denote its
infinitesimal generator by $\frac{\partial}{\partial \theta}$. The disc bundle $D$ is strictly
pseudoconvex in $L^*$, since the curvature $\Theta_h$ of $h$  is positive, and hence $X$
inherits the structure of a strictly pseudoconvex CR manifold.
Associated to $X$ is the contact form $\al=
-i\partial\rho|_X=i\dbar\rho|_X$ and  the volume
form
\begin{equation}\label{dvx}dV_X=\frac{1}{m!}\al\wedge
(d\al)^m=\pi^m\,\al\wedge\pi^*dV_M\,.\end{equation}
It induces the $\lcal^2(X, dV_X)$  inner product
\begin{equation}\label{unitary} \langle  F_1, F_2\rangle
=\frac{1}{2\pi^{m+1}}\int_X F_1\overline{F_2}dV_X.\end{equation}

A section  $s_N$ of $L^N$ lifts  to an
equivariant function $\hat{s}_N$ on $L^*$, defined by
$$\hat{s}_N(\lambda) = \left( \lambda^{\otimes N}, s_N(z)
\right)\,,\quad \la\in L^*_z\,,\ z\in M$$ 
We
henceforth restrict $\hat{s}_N$ to $X$ and then the equivariance
property takes the form $\hat s_N(r_\theta x) = e^{iN\theta} \hat
s_N(x)$.  We may express the lift in local coordinates $z$ on $U \subset M$
and in a local holomorphic frame $e_L: U \to M$. They induce
local coordinates $(z,\theta)$ on $X$ by the rule
$x=e^{i\theta}|e_L(z)|_he_L^*(z)\in X$. The equivariant lift of
a section $s=fe_L^{\otimes N}\in H^0(M,L^N)$ is then given
by
\begin{equation}\label{lifta}\hat s(z,\theta) =
e^{iN\theta} |e_L|_h^N f(z) = e^{N\left[-\half
\phi(z) +i\theta \right]} f(z)\;,\end{equation}
where $|e_L(z)|_h = e^{- \frac 12\phi(z)}$ and $\phi(z)$  is the local \kahler potential.
The map $s\mapsto \hat{s}$ is a unitary equivalence
between $L^2(M, L^{ N})$ and $\lcal^2_N(X, dV_X)$, where $\lcal^2_N(X, dV_X) \subset \lcal^2(X, dV_X)$
is the subspace of equivariant functions transforming by $e^{i N \theta}$ under $r_{\theta}$.

The   Hardy
space $\hcal^2(X) \subset \lcal^2(X, dV_X)$ is by definition
the subspace of functions that are annihilated by the
Cauchy-Riemann operator $\dbar_b$. The
$S^1$ action on $X$ commutes with $\bar{\partial}_b$ and hence  the subspace $\hcal^2_N(X) \subset \hcal^2(X)$
of equivariant CR functions is the intersection $\hcal^2(X) \cap \lcal^2_N(X, dV_X)$.
 The lift of $s_N \in H^0(M, L^N)$
is then an  equivariant  CR function $\hat{s}_N \in \hcal^2(X)$,
    hence
$\hcal^2(X) = \bigoplus_{N =0}^{\infty} \hcal^2_N(X)$. The Szeg\H o kernel $\hat{\Pi}$ is the (distribution) kernel of
the orthogonal projection $L^2(X) \to \hcal^2(X)$. 

The  \szego kernels  $\Pi_{N}$ lift to  equivariant scalar kernels
$\hat{\Pi}_{N}$ on $X \times X$, with $\hat{\Pi}_{N}$  the Schwartz kernel of the  orthogonal
projection $\hat{\Pi}_{N} : \lcal_N^2(X, dV_X)\rightarrow \hcal^2_N(X)$,
defined by
\begin{equation} \hat{\Pi}_{N} F(x) = \int_X \hat{\Pi}_{N} (x,y) F(y) dV_X (y)\,,
\quad F\in\lcal^2(X, dV_X)\,. \label{PiNF}\end{equation}  Then $\hat{\Pi}_{N} $ is the $N$th Fourier
coefficient of $\hat{\Pi}$ and in terms of the orthogonal decomposition above, we have  
\begin{equation} \label{SUMN} \hat{\Pi} = \sum_N \hat{\Pi}_N  \end{equation}
as operators on $L^2(X)$.

Using~\eqref{FNdef}, the Bergman kernel $\hat\Pi_N$
can be given as
\begin{equation}\label{szego2} \hat{\Pi}_{N}(x,y) =\sum_{j=1}^{d_N}
\hat{S}_j^N(x)\overline{ \hat{S}_j^N(y)}\,,\end{equation} where
$S_1^N,\dots,S_{d_N}^N$ form an orthonormal basis of $H^0(M,L^N)$.
By~\eqref{lifta}, the  lifted \szego
kernel is given in terms of the Bergman kernel on $U \times U$ by
\begin{equation}\label{szegolift}\hat{\Pi}_{N}(z,\theta;w,\phi) =
e^{N\left[-\half \phi (z)-\half \phi(w)
+i(\theta-\phi)\right]} B_N(z,\bar w)\;.\end{equation}

Theorem \ref{UPSZEGON} can be restated as follows in terms of the 
 \szego kernels on the unit circle bundle:
\begin{equation} \label{UPCB} \hat {\Pi}_N(x, y) = \sum_{\gamma \in \Gamma} \tilde{{\hat \Pi}}_N(\gamma \cdot
x, y).  \end{equation}
In this formulation, translation by $\gamma$ acts on a scalar kernel rather than a section of a line bundle. By
\eqref{SUMN} one has a similar Poincar\'e series formula for $\hat{\Pi}_N$.

\subsection{\szego kernels for the hyperbolic disc}

To illustrate the notions above, we consider the  familiar example of the  lifted \szego kernels  on the hyperbolic disc $\D$.
In this case, the positive line bundle is the canonicl bundle  $L = T^{*(1,0)}\D$ equipped
 with the hyperbolic
hermitian metric $h_{\D}$ dual to the hyperbolic metric on $T^{(1,0)} \D$.
There exists a    global
holomorphic frame $dz$ for $L \to \D$ with Hermitian norm
$\|dz\|_{h_{\D}}^2 = (1 - |z|^2)^2.$ Hence the \kahler potential is given
by $\phi(z) = \log (1 - |z|^2)^{-2}.$
Thus for $s_N = f (dz)^N$, one has
$\| f (dz)^N \|_{h_{\D}}^2 = |f(z)|^2 (1 - |z|^2)^{2N}. $
The dual bundle $L^*$ is  $T^{(1,0)} \D$ with  the
usual hyperbolic metric, so that
$X = \{(z, v) \in T^{(1,0)}\D: |v|_z = 1\}$
is  the unit tangent bundle of $\D$, i.e. equals $P SU(1,1)$.
 In the local coordinates  $(z,\theta)$ on $X$ denote the coordinates of the point
$x=e^{i\theta} |\frac{\partial}{\partial z}|^{-1}_h
\frac{\partial}{\partial z} \in  X$, we have
\begin{equation}\label{lift}\hat s(z,\theta) =
e^{iN\theta} |dz|^N_h f(z) = e^{
iN \theta } (1 - |z|^2)^{N} f(z)\;.\end{equation}

The Bergman kernel for $L^t$  (denoted in \cite{Ear} by $k_t$)  is explicitly given by
$$2\,k_t(z, w) = (1 - z \bar{w})^{- 2 t} =
\sum_{j = 0}^{\infty} (2t)_j  \frac{(z \bar{w})^j}{j!}, \;\;$$
where
$t_j = (t-1)\,t\, (t + 1) \cdots (t + j - 1). $ The fact that $k_t = k_1^t$ is a reflection of the homogeneity
of $\D$. Furthermore,  $k_1(z,z) dz  = dm = dV_h$  and
$k(z,z)^{-1/2} = e^{- \phi/2} = (1 - |z|^2)$ when $L = T^{* (1,0)}\D$.
In the notation above, $k_t(z,w) = F_{h_{\D}^t}(z,w),$ where $F_t$ is the local Bergman kernel in the frame
$(dz)^t$. The lifted Bergman kernel is give by
\begin{equation}\label{szegoD} \hat{\Pi}_{h_{\D}^t} (z, 0; w, 0)=
 C_{m} \left(\frac{(1 - z \cdot \bar{w})}{\sqrt{1 - |z|^2}
\sqrt{1 - |w|^2}} \right)^{- 2t} \end{equation}
for constant $C_m$ depending only on $m$.

We also refer to \cite{Ear} for calculations in the  general  setting of a bounded homogeneous domain $B$ with $L = K$ (the canonical bundle).  The Bergman kernel in that setting is induced by 
the natural inner product on $H^0(B, K)$, i.e. on $(n, 0)$ forms and the Bergman kernel
  $k(z,\zeta) dz \otimes {d\bar\zeta}$ is naturally an $(n, n)$ form.

\subsection{Bergman/\szego kernel on  $\tilde{M}$ and the action of $\Gamma$}

We now consider the \kahler cover $\pi: (\tilde{M}, \tilde{\omega}) \to (M, \omega)$. By definition,
$\tilde{L} = \pi^* L$ and $\tilde{h} = \pi^*h$. We then define the unit circle bundle $\tilde{X} \to \tilde{M}$ similarly.

Because $\tilde{M}$ is simply connected,  $\Gamma$ automatically lifts to $\tilde{X}$ as
a group of CR holomorphic contact transformations with respect to $\alpha$, and in particular the action
of $\Gamma$ linearizes on the spaces $ H_{L^2}^0(\tilde{M}, \tilde{L}^N)$. We briefly recall the proof: by
assumption, $\gamma \in \Gamma$ is an isometry of $\tilde{\omega}$ and thus is a symplectic transformation.
We claim that
 $\gamma$ preserves the holonomy map of the connection
$1$-form $\alpha$, i.e. the map
$H(\beta) = e^{i \theta_{\beta}} $
   defined by horizontally lifting a loop
$\beta: [0, 1] \to M$ to $\tilde{\beta}: [0, 1] \to X$ with respect to $\alpha$  and
expressing $\tilde{\beta}(1) = e^{i \theta_{\beta}}
\tilde{\beta}(0). $
Then  $\gamma$ preserves the  holonomy-preserving in the sense tht
$ H(\gamma (\beta)) = H(\beta)$ for all loops $\beta$.  Indeed, we may assume that  the loop is contained in the domain
of a local frame $s: U \to X$, and  then
$ H(\beta) = \exp (2 \pi i \int_{\beta} s^* \alpha ).
$
But $\tilde{M}$ is simply connected so that  $\beta = \partial \sigma$ and  $\int_{\beta} s^* \alpha =
\int_{\sigma} \omega$. Since $\gamma$ is   symplectic, it thus preserves
the holonomy around homologically trivial
loops and all loops on $\tilde{M}$ are trivial.

Since $\Gamma$ acts by holomorphic transformations of $\tilde{M}$, it lifts to a group of $CR$ 
maps on $\tilde{X}$ which commmute with the $S^1$ action.
It is easy to see that $\tilde{\Pi}_{N}$ commutes with the action of $\Gamma$ on $\hcal^2_N(X)$, hence
\begin{equation}\label{invszego}
\tilde{\Pi}_{N}(\gamma x, \gamma y) = \tilde{\Pi}_{N}(x, y). 
\end{equation}

This identity is often written as a transformation law for the scalar  \szego kernel $\hat\Pi_N$ of a local frame under $\Gamma$.
In  most  works such as \cite{Ear}, $\tilde{M}$ is contractible and $\tilde{L} \to \tilde{M}$ is holomorphically
trivial, hence there exists a global frame  $\tilde{e}_L$. Since $\Gamma$ linearizes on $H^0(\tilde{M}, \tilde{L}^N)$,
there exists a function $J(\gamma, z)$ (a factor of automorphy) such that $\gamma^* \tilde{e}_L = J(\gamma, z) \tilde{e}_L$.
Then $$B_N(\gamma z, \gamma w) = J(\gamma, z) \overline{J(\gamma, w)} B_N(z, w). $$

\section{Proof of Theorem \ref{UPSZEGON}}

In this section we prove Theorem \ref{UPSZEGON}.

\subsection{Agmon estimates}

We first sketch the proof of the following Agmon estimate for the
\szego kernel, which is almost entirely contained in the previous
work of Delin, Lindholm and Berndtsson.

\begin{theo}[\cite{Del,L,BBSj} (See Theorem 2 of \cite{Del} and Proposition 9 of \cite{L})] \label{AGMON1}  Let $M$ be a compact
\kahler manifold, and let $(L, h) \to M$ be a positive Hermitian line
bundle.  Then the exists a constant $\beta=\beta(M,L,h)>0$ such that
$$|\tilde\Pi_N(x, y)|_{\tilde h^N} \leq e^{-\beta \sqrt{N} {d} (x, y)}, \;\; \mbox{for}\;\; {d} (x, y) \geq 1.  $$
where ${d}(x,y)$ is the Riemannian distance with respect to
the \kahler metric $\tilde{\omega}$.\end{theo} 

\begin{proof}[Review of the proof] In ~\cite[Proposition 9]{L}, the following is proved for a strictly pseudo-convex domain of $\C^m$. The same argument works on strictly pseudo-convex manifold. In our notation, it was proved that 
$$ |\tilde\Pi_N(x,y)|^2_{\tilde h^N} \leq CN^{2m}e^{- \epsilon\sqrt N d(x,y)} $$
for some $\epsilon>0$.
Since $d(x,y)\geq 1$, the polynomial term of $N$ can be absorbed by the exponential term by shrinking $\epsilon$.\end{proof}

\subsection{$\dbar$ estimates and existence theorems on complete \kahler manifolds} The following H\"ormander's $\bar\pa$ estimate is essential in our proof of Theroem~\ref{UPSZEGON}. 

\begin{theo}\label{demai} Let $(X, \omega)$ be a complete \kahler manifold, and let $L \to X$ be a hermitian line
bundle with the hermitian metric $h$. Assume that there is an integer $N_0$ such that   the curvature $\sqrt{-1}N_0\,  \Theta(h)+\textup{Ric}(\omega)\geq c\,\omega$ is positive for some $c>0$. Then for any $N\geq N_0$,  the following is true: for any $g \in L^2(X, \bigwedge^{0,1}\otimes L^N) $ satisfying $\dbar g = 0$,
and $\int_X |g|_{h^N}^2{\omega^n} < \infty$, there exists $f \in L^2(X, L^N) $ such that $\dbar f = g$ and
$$\int_X |f|_{h^N}^2 \omega^n \leq c^{-1}\int_X  |g|_{h^N}^2 \omega^n. $$
\end{theo}

\subsection{Bergman kernels modulo $O(e^{- \delta N})$} We now begin the local analysis of the Bergman-Szeg\H o kernel
above and below, following the notation and terminology of \cite{BBSj}.

Let $B$ be the unit ball in $\C^m$,  and let
$\chi \in C_c^{\infty}(B)$ be a smooth cutoff function  equal to one on the ball of  radius $1/2$. 
Let $M$ be  a \kahler manifold and let $z$ be a fixed point of $M$. Without loss of generality, we assume that the injectivity radius at $z$ is at least $2$.
We identify
$B$ with the unit geodesic  ball around $z$ in $M$ and let $\phi$ be a local \kahler potential for $h$ relative to a local
frame $e_L$ of $L$. Writing a section $s \in H^0(B, L^N)$ in the form $u_N = u e^N_L$ we identify sections with local
holomorphic functions. We define the local $L^2$ norm of the section by
$$\|u_N\|_{N \phi}^2 =\frac{1}{m!} \int_B |u|^2 e^{- N \phi} \omega^m. $$

Let $s$ be a function or a section of a line bundle. We write 
$s=O(R)$, if there is a constant $C$ such that the norm of $s$ is bounded by $CR$.
A family
$K_N(x, y)$ of smooth kernels is a reproducing kernel modulo $O(e^{- \delta N})$ for some $\delta>0$, if there exists an $\epsilon>0$ such that for any fixed
$z$, and any local holomorphic function $u$ on the unit ball $B$, we have
$$
u_N(x) =\int_B(K_N(x,y),\chi(y)u_N(y))_{h^N}dV_M(y)+ O(e^{-\delta N} )\| u_N\|_{N \phi}$$
uniformly in $x\in B_\epsilon=\{x\mid d(x,z)<\epsilon\}$.  Each function $K_N(x,y)$ is called a Bergman kernel modulo $O(e^{- \delta N})$ if it is additionally
holomorphic in $x$.

\subsection{$\tilde \Pi_N(x,y)$ is a Bergman kernel modulo $O(e^{- \delta  N})$}
In this subsection, we prove the following lemma

\begin{lem}\label{lem4}
There exists a constant $\delta>0$ such that $\tilde\Pi_N(x,y)$ is a Bergman kernel modulo $O(e^{- \delta  N})$.
\end{lem}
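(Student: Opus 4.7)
The plan is to recognize the left-hand side of the desired identity as $\tilde\Pi_N(\chi u_N)(x)$, the $L^2$-projection of the cutoff section onto $H^0_{L^2}(\tilde M, \tilde L^N)$ evaluated at $x$, and to measure its deviation from $u_N(x)$ by a weighted $\dbar$-estimate. Orthogonally decompose $\chi u_N = \tilde s_N + v$ with $\tilde s_N = \tilde\Pi_N(\chi u_N) \in H^0_{L^2}(\tilde M, \tilde L^N)$ and $v$ in the orthogonal complement; then $v$ is the minimum-$L^2$-norm solution of $\dbar v = u_N\,\dbar\chi$. For $x \in B_\epsilon$, where $\chi\equiv 1$, one has $\tilde s_N(x) = u_N(x) - v(x)$, and the lemma reduces to the pointwise estimate
\[
|v(x)|_{\tilde h^N} \;=\; O(e^{-\delta N})\,\|u_N\|_{N\phi}.
\]

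To prove this I would first produce an auxiliary solution $v_1$ of the same $\dbar$-equation that is exponentially small near $x$. Let $\tilde\psi$ be a smooth nonnegative function on $\tilde M$, supported in $B$, identically $0$ on a neighborhood $B_{2\epsilon}(z)$ of $x$, identically equal to some $\epsilon_0 > 0$ on $\supp\dbar\chi$, and with Hessian so small that the curvature hypothesis $\sqrt{-1}N\Theta(\tilde h) + N\ddbar\tilde\psi + \textup{Ric}(\tilde\omega) \ge c\tilde\omega$ required by Theorem~\ref{demai} is preserved for all $N \ge N_0$. Applying that theorem with the twisted metric $\tilde h^N e^{-N\tilde\psi}$ produces $v_1$ with
\[
\int_{\tilde M} |v_1|^2_{\tilde h^N}\,e^{-N\tilde\psi}\,dV \;\le\; C\int_{\supp\dbar\chi} |u_N|^2_{\tilde h^N}\,e^{-N\tilde\psi}\,dV \;\le\; C\,e^{-N\epsilon_0}\,\|u_N\|^2_{N\phi}.
\]
Since $\tilde\psi\equiv 0$ on $B_{2\epsilon}(z)$, this yields $\|v_1\|^2_{L^2(B_{2\epsilon}(z),\,\tilde h^N)} \le Ce^{-N\epsilon_0}\|u_N\|^2_{N\phi}$. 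Because $\dbar v_1 = u_N\dbar\chi$ vanishes on $B_{1/2}(z)$, the section $v_1$ is holomorphic there, and a sub-mean-value estimate on a ball of fixed small radius $r$ around $x$, with $r$ chosen so small that $e^{CNr} \le e^{N\epsilon_0/2}$, converts the $L^2$-smallness into the pointwise bound $|v_1(x)|_{\tilde h^N} = O(e^{-\delta N})\|u_N\|_{N\phi}$.

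Finally, since $v$ and $v_1$ solve the same $\dbar$-equation their difference lies in $H^0_{L^2}$, and the orthogonality of $v$ forces $v = v_1 - \tilde\Pi_N v_1$, so it remains to bound the error $(\tilde\Pi_N v_1)(x) = \int_{\tilde M}\tilde\Pi_N(x,y)\,v_1(y)\,dV$. I would split this integral over a fixed ball $B_{\delta_0}(x)\subset B_{2\epsilon}(z)$ and its complement. Over $B_{\delta_0}(x)$, Cauchy--Schwarz gives
\[
\left|\int_{B_{\delta_0}(x)} \tilde\Pi_N(x,y)\,v_1(y)\,dV\right| \;\le\; \tilde\Pi_N(x,x)^{1/2}\,\|v_1\|_{L^2(B_{\delta_0}(x))} \;\le\; CN^{m/2}e^{-N\epsilon_0/2}\|u_N\|_{N\phi},
\]
which is $O(e^{-\delta N})$. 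Over the complement, the near-diagonal Boutet de Monvel--Sj\"ostrand parametrix bound $|\tilde\Pi_N(x,y)| \le CN^m e^{-cN d(x,y)^2}$ for $d(x,y)\le c_0$, combined with the Agmon estimate \eqref{AGMON} for $d(x,y) \ge 1$, yields $\|\tilde\Pi_N(x,\cdot)\|_{L^2(B_{\delta_0}(x)^c)} = O(e^{-\delta N})$, which together with $\|v_1\|_{L^2(\tilde M)} \le C\|u_N\|_{N\phi}$ gives the same order of bound. Combining, $v(x) = v_1(x) - (\tilde\Pi_N v_1)(x) = O(e^{-\delta N})\|u_N\|_{N\phi}$, as required. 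The main obstacles are the construction of $\tilde\psi$ with small enough Hessian to preserve the curvature hypothesis of Theorem~\ref{demai}, and the fact that the full $e^{-\delta N}$ (rather than the weaker $e^{-\delta\sqrt N}$ coming from \eqref{AGMON} alone) off-diagonal decay of $\tilde\Pi_N$ requires the local Boutet de Monvel--Sj\"ostrand parametrix in the near-diagonal range.
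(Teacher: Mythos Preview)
Your strategy---decompose $\chi u_N = \tilde\Pi_N(\chi u_N) + v$, solve $\bar\partial v_1 = u_N\,\bar\partial\chi$ by Theorem~\ref{demai} with a weight $e^{-N\tilde\psi}$ concentrated on $\supp\bar\partial\chi$, and then write $v = v_1 - \tilde\Pi_N v_1$---is structurally different from the paper's, which instead interposes the explicit local reproducing kernel $P_N$ of \cite{BBSj} and applies Theorem~\ref{demai} to $\bar\partial(\chi P_{N,y})$. The weighted part of your argument is fine: with $\epsilon_0$ small the curvature hypothesis survives, and one gets $\|v_1\|_{L^2(B_{2\epsilon}(z))}$ and $\|v_1\|_{L^2(\tilde M\setminus B)}$ of order $e^{-N\epsilon_0/2}\|u_N\|_{N\phi}$, hence $|v_1(x)|=O(e^{-\delta N})\|u_N\|_{N\phi}$ by sub-mean-value.

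The gap is in the estimate of $(\tilde\Pi_N v_1)(x)$. On the annulus $B\setminus B_{2\epsilon}(z)$ your weight satisfies $0\le\tilde\psi\le\epsilon_0$, so you only get $\|v_1\|_{L^2(B\setminus B_{2\epsilon})}\le C\|u_N\|_{N\phi}$ with no exponential gain. Consequently you need $\|\tilde\Pi_N(x,\cdot)\|_{L^2(B\setminus B_{2\epsilon})}=O(e^{-\delta N})$, i.e.\ the pointwise bound $|\tilde\Pi_N(x,y)|_{\tilde h^N}=O(e^{-\delta N})$ for $\epsilon\le d(x,y)\le 1+\epsilon$. The Agmon estimate~\eqref{AGMON} only supplies $e^{-\beta\sqrt N d(x,y)}$ in this range, which would downgrade your conclusion to $O(e^{-\delta\sqrt N})$. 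You appeal instead to the Gaussian bound $|\tilde\Pi_N(x,y)|\le CN^m e^{-cNd(x,y)^2}$ from the Boutet de Monvel--Sj\"ostrand parametrix, but that bound on $\tilde\Pi_N$ (as opposed to on $P_N$) is obtained precisely by proving $\tilde\Pi_N=P_N+O(e^{-\delta N})$ via H\"ormander's estimate---which is exactly the content of \cite[Theorem~3.1]{BBSj}, i.e.\ of Lemma~\ref{lem4} itself. So the last step is circular. The paper's route avoids this because it never needs any a~priori off-diagonal information about $\tilde\Pi_N$: the input $\bar\partial(\chi P_{N,y})=O(e^{-\delta N})$ is a property of the \emph{explicitly constructed} kernel $P_N$, and Theorem~\ref{demai} then yields $\|(1-\tilde\Pi_N)(\chi P_{N,y})\|=O(e^{-\delta N})$ directly.
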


\begin{proof} 
Let $P_N(x,y)$ be the local reproducing kernel constructed in~\cite{BBSj} and let $P_N$ be the corresponding operator. For any holomorphic    function $u$ on the unit disk $B$, write
\[
u_N-P_N(\chi u_N)=B_N u_N
\]
for operators  $B_N$.
Then by definition
\[
B_Nu_N(x)=O(e^{-\delta N})\|u_N\|_{N\phi}
\]
for some constant $\delta>0$ and for $x\in B_\epsilon$.

Let $\chi_1$ be a cut-off function of the unit ball $B$ such that $\chi_1$ is $1$ in a fixed neighborhood of the origin. We assume that $\textup{supp}\,(\chi_1)\subset B_\epsilon$. Consider the identity
\[
\chi_1(1-\tilde \Pi_N)(\chi u_N)=\chi_1(1-\tilde\Pi_N)(\chi P_N(\chi u_N))+\chi_1(1-\tilde\Pi_N)(\chi (B_N u_N)).
\]
Since $\Pi_N$ is a projection operator, by the definition of $P_N$, we have
\begin{equation}\label{5}
\|\chi_1(1-\tilde\Pi_N)(\chi (B_N u_N))\|_{N\phi}=O(e^{-\delta N})\|u_N\|_{N\phi}.
\end{equation}
According to~\cite{BBSj},
 $P_N(x,y)$ is holomorphic with respect to $x$ when $d(x,y)$ is small. Thus for $y\in B$, we have
\[
\|\chi_1(1-\tilde\Pi_N)(\chi P_N(\cdot,y))\|_{N\phi}=O(e^{-\delta N}).
\]
The proof is the same as that of~\cite[Theorem 3.1]{BBSj} which we  include here for the sake of completeness.

Let $P_{N,y}(x)=P_N(x,y)$. By the construction of the local reproducing kernel (cf. ~\cite[(2.3), (2.7)]{BBSj}), we have
\[
\bar\pa(\chi P_{N,y})= O(e^{-\delta N})
\]
for some $\delta>0$.
By H\"ormander's  $L^2$-estimate (cf. Theorem~\ref{demai}), there exists $v_{N,y}$ such that 
\[
\bar\pa v_{N,y}=\bar\pa(\chi P_{N,y})
\]
with the estimate
\[
\int_{\tilde M}|v_{N,y}|_{\tilde h^N}^2dV_{\tilde M}\leq C\int_{\tilde M}|\bar\pa(\chi P_{N,y})|_{\tilde h^N}^2dV_{\tilde M}=O(e^{-2\delta N}).
\]
By definition, 
\[
(1-\tilde\Pi_N)(\chi P_N(\cdot,y))=(1-\tilde\Pi_N) v_{N,y}.
\]
Therefore for fixed $y$,
\[
\|\chi_1(1-\tilde\Pi_N)(\chi P_N(\cdot,y))\|_{N\phi,y}\leq C\|v_{N,y}\|_{N\phi,y}\leq C e^{-\delta N},
\]
where the norm $\|\cdot\|_{N\phi,y}$ is the $\|\cdot\|_{N\phi}$ norm for the $x$ variable and the pointwise norm for the fixed point $y$.
Thus we obtain
\begin{equation}\label{6}
\|\chi_1(1-\tilde\Pi_N)(\chi P_N(\chi u_N))\|_{N\phi}\leq Ce^{-\delta N}\|u_{N}\|_{N\phi}.
\end{equation}
Combining~\eqref{5} and ~\eqref{6}, we conclude that 
\[
\|\chi_1(1-\tilde\Pi_N)(\chi u_N)\|_{N\phi}=O(e^{-\delta N})\|u_N\|_{N\phi}.
\]

Note that in a neighborhood of the origin, $(1-\Pi_N)(\chi u_N)$ is holomorphic. By working on the ball of radius $N^{-1}$ and the mean-value inequality, the above $L^2$ bound implies the following $L^\infty$ bound
\[
(1-\tilde\Pi_N)(\chi u_N)(x)=O(N^{2m}e^{-\delta N})\|u_N\|_{N\phi}
\]
where $x\in B_\epsilon$. The $N^{2m}$ term can be absorbed by the exponential term if  we further shrink $\delta>0$, and the lemma is proved.

\end{proof}

\subsection{Completing the proof of Theorem~\ref{UPSZEGON}}
Let  $y$ be a fixed point of $\tilde M$.
If we use the same local trivialization of $\tilde L$ at each $\gamma\cdot y$ (that is, these local trivializations are identical via  the one at $\pi(y)\in M$) , then the summation of the right side of the following equation is well-defined
\[
{\Pi}^{\Gamma}_N(x, y): =  \sum_{\gamma \in \Gamma} \tilde{\Pi}_N(x, \gamma \cdot
y).
\]
 By Theorem \ref{AGMON1}, the series converges
for sufficiently large $N$.

We first prove that 
\begin{lem} \label{lem5} There is a constant $\delta>0$ such that  $\Pi^{\Gamma}_N(x, y)$ is a   Bergman   kernel mod $ O(e^{- \delta \sqrt N})$ in the sense that
\[
\int_{\tilde M} (\Pi_N^\Gamma(\cdot,y),\chi(y)u_N(y))_{h^N} dV_M(y)=\chi u_N+O(e^{-\delta \sqrt N})\|\chi u_N\|_{N\phi}
\]
uniformly for any local holomorphic function $u$ on $\tilde M$.
   \end{lem}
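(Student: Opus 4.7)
\medskip

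My plan is to decompose the sum defining $\Pi^\Gamma_N(x,y)$ into the identity term plus the non-identity tail, handle the identity term using Lemma~\ref{lem4}, and control the tail using the Agmon estimate of Theorem~\ref{AGMON1} together with an estimate on the exponential growth of the orbit of $\Gamma$.

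First I would write
\[
\int_{\tilde M}(\Pi^\Gamma_N(\cdot,y),\chi(y)u_N(y))_{h^N}dV_M(y) = \int_{\tilde M}(\tilde\Pi_N(\cdot,y),\chi(y)u_N(y))_{h^N}dV_M(y) + \sum_{\gamma\neq \mathrm{id}}\int_{\tilde M}(\tilde\Pi_N(\cdot,\gamma\cdot y),\chi(y)u_N(y))_{h^N}dV_M(y).
\]
The first term reproduces $\chi u_N$ modulo $O(e^{-\delta N})\|\chi u_N\|_{N\phi}$ directly from Lemma~\ref{lem4}. So the real content is to show that the tail is $O(e^{-\delta\sqrt N})\|\chi u_N\|_{N\phi}$, with a possibly smaller $\delta>0$.

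For each $\gamma\neq\mathrm{id}$, by Cauchy--Schwarz on the compact set $\mathrm{supp}(\chi)$,
\[
\Bigl|\int_{\tilde M}(\tilde\Pi_N(x,\gamma\cdot y),\chi(y)u_N(y))_{h^N}dV_M(y)\Bigr| \le \Bigl(\int_{\mathrm{supp}(\chi)}|\tilde\Pi_N(x,\gamma\cdot y)|^2_{\tilde h^N}dV_M(y)\Bigr)^{1/2}\|\chi u_N\|_{N\phi}.
\]
Fix a base point $x_0$ projecting to $\pi(z)$, so that $x$ and $\mathrm{supp}(\chi)$ lie in a geodesic ball of bounded radius $r_0$ about $x_0$. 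For $\gamma\neq\mathrm{id}$ and $y\in\mathrm{supp}(\chi)$ one has the triangle inequality bound $d(x,\gamma\cdot y)\geq d(x_0,\gamma\cdot x_0)-2r_0$, which is bounded below by $\ell_1-2r_0$ and tends to infinity as $\gamma$ escapes to infinity. Theorem~\ref{AGMON1} then gives $|\tilde\Pi_N(x,\gamma\cdot y)|_{\tilde h^N}\leq e^{-\beta\sqrt N d(x,\gamma\cdot y)}\leq e^{2\beta\sqrt N r_0}e^{-\beta\sqrt N d(x_0,\gamma x_0)}$, and the tail is bounded, up to multiplicative constants, by
\[
e^{2\beta\sqrt N r_0}\Bigl(\sum_{\gamma\neq\mathrm{id}}e^{-\beta\sqrt N d(x_0,\gamma x_0)}\Bigr)\|\chi u_N\|_{N\phi}.
\]

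The last step is to show this quantity is $O(e^{-\delta\sqrt N})$ for some $\delta>0$ and all sufficiently large $N$. Let $\mathcal{N}(R)=\#\{\gamma\in\Gamma:d(x_0,\gamma x_0)\leq R\}$; since $\tilde M$ has Ricci curvature bounded below on fundamental domains, Bishop--Gromov gives $\mathcal{N}(R)\leq Ce^{vR}$ for some constant $v=v(\tilde M)>0$. An integration by parts in the Stieltjes integral $\sum_\gamma e^{-\beta\sqrt N d(x_0,\gamma x_0)}=\int_0^\infty e^{-\beta\sqrt N R}d\mathcal{N}(R)$ then yields, for $\beta\sqrt N>v$,
\[
\sum_{\gamma\neq\mathrm{id}}e^{-\beta\sqrt N d(x_0,\gamma x_0)}\leq \frac{C\beta\sqrt N}{\beta\sqrt N-v}\,e^{-(\beta\sqrt N-v)\ell_1}.
\]
Choosing $r_0$ small enough and $N$ large enough that $2\beta r_0<\beta\ell_1$ (shrinking $\epsilon$ in Lemma~\ref{lem4} if necessary), the combined tail bound is $O(e^{-\delta\sqrt N})\|\chi u_N\|_{N\phi}$ for some $\delta>0$, which, combined with the identity-term contribution, completes the proof.

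The main obstacle is precisely this balancing of the exponential growth rate $v$ of $\Gamma$ against the $\beta\sqrt N$ decay of Agmon: this is what forces the weaker error $e^{-\delta\sqrt N}$ rather than $e^{-\delta N}$, and is the source of the implicit $N_0=N_0(M,L,h)$ required in Theorem~\ref{UPSZEGON}.
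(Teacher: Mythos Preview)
Your proposal is correct and follows essentially the same approach as the paper: split off the identity term (handled by Lemma~\ref{lem4}), and control the tail $\sum_{\gamma\neq 1}$ via the Agmon estimate together with at-most-exponential orbit growth coming from Bishop--Gromov. The paper packages the tail slightly differently---it works with $d(B,\gamma B)$ and factors $e^{-\beta\sqrt N d}=e^{-\frac12\beta\sqrt N d}\cdot e^{-\frac12\beta\sqrt N d}$, pulling out $e^{-\frac12\beta\eta\sqrt N}$ with $\eta=\inf_{\gamma\neq 1}d(B,\gamma B)$ and bounding the remaining sum by $\int_{\tilde M}e^{-\frac12\sigma\beta\sqrt N d(z,y)}dV_{\tilde M}$---whereas you use the counting function $\mathcal N(R)$ and a Stieltjes integral; these are equivalent. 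One small wording issue: you cannot really ``choose $r_0$ small'' since $r_0$ must contain $\mathrm{supp}(\chi)\subset B$, so $r_0=1$; what makes the balancing $2r_0<\min_{\gamma\neq 1}d(x_0,\gamma x_0)$ work is the standing assumption that the injectivity radius at $z$ is at least $2$ (so the lifted unit balls are disjoint and the local displacement is $\geq 4$), not the global $\ell_1$, which could in principle be smaller than $2$.
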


\begin{proof}

 Holomorphic sections $s \in H^0(M, L^N)$ lift to $\tilde s$ on $\tilde{M}$ as holomorphic sections
of $\tilde{L}^N$, so our integration lifts to the universal cover.  
Let $z$ be a fixed point of $M$. 
As assumed before, the injectivity radius at $z$ is at least $2$. Let $B$ be the unit ball about $z$. 
The pre-image of $B$ in the universal cover $\tilde M$ are disjoint balls. 
By abuse of notation, we identify  $B$ with   one of these balls and the variables $z,x,y$ are used for points on both $M$ and $\tilde M$.  Let $u$ be a function on $B$. Then  $u$  is regarded as local functions on both $M$ and $\tilde M$.  We regard $u_N$ to be the section of $\tilde L^N$ extended by zero outside $B$.  Thus we have
\begin{align*}
&\int_M (\Pi_N^\Gamma(\cdot,y),\chi(y)u_N(y))_{h^N} dV_M(y)=\sum_{\gamma\in\Gamma}\int_{\tilde M} (\tilde\Pi_N(\cdot,\gamma \cdot y), {\chi(y)u_N(y)} )_{\tilde h^N}dV_{\tilde M}(y).
\end{align*}

Define $d(B, \gamma B)$ to be the distance between $B$ and $\gamma B$. Then  the Agmon estimate gives
$$ |\tilde{\Pi}_N(x, \gamma \cdot y) |_{\tilde h^N} \leq e^{- \beta\sqrt N d(B,\gamma B)}$$
for some $\beta=\beta(M,L,h)>0$ and any $x,y\in B$. Therefore, we have
\[
\sum_{\gamma\neq 1}\left|\int_{\tilde M} (\tilde\Pi_N(\cdot,\gamma \cdot y), {\chi(y)u_N(y)} )_{\tilde h^N}dV_{\tilde M}(y)\right|_{\tilde h^N}\leq
C \sum_{\gamma\neq 1}e^{- \beta\sqrt N d(B,\gamma B)}\|\chi u_N\|_{N\phi}.
\]

By compactness, there is a constant $\sigma>0$ such that 
\[
\sigma\, d(x,\gamma y)\leq d(B,\gamma B)
\]
for any $x,y\in B,\gamma\neq 1$.

Let 
\[
\eta=\inf_{\gamma\neq 1}d(B,\gamma B).
\]
Then we have
\begin{align*}
&
\sum_{\gamma\neq 1}e^{- \beta\sqrt N d(B,\gamma B)}\leq e^{-\frac 12\beta\eta\sqrt N}
\sum_{\gamma\neq 1}e^{-\frac 12 \beta\sqrt N d(B,\gamma B)}\\
&\leq
Ce^{-\frac 12\beta\eta\sqrt N}\int_{\tilde M}e^{-\frac 12 \beta\sqrt N d(z,y)}dV_{\tilde M}(y).
\end{align*}
By the Bishop volume comparison theorem, since the Ricci curvature  has a lower bound, the volume growth of $\tilde M$  is at most exponential. Thus the integral is convergent and we have 
\[
\sum_{\gamma\neq 1}e^{- \beta\sqrt N d(B,\gamma B)}\leq Ce^{-\frac 12\beta\eta\sqrt N}.
\]

Combining the above inequality with lemma~\ref{lem4}, we have
\[
\int_M (\Pi_N^\Gamma(\cdot,y),\chi(y)u_N(y))_{h^N} dV_M(y)=\chi u_N+O(e^{-\delta N}+e^{-\frac 12\beta\eta\sqrt N})\|u\|_{N\phi}.
\]
Since $\delta$ can be chosen  arbitrarily small, the conclusion of the lemma follows.
\end{proof}

\begin{proof}[Proof of Theorem~\ref{UPSZEGON}]
Let
$$R_N(z,w) = \Pi^{\Gamma}_N(z,w)-\Pi_N(z,w). $$
Using the same method as that in Lemma~\ref{lem4} (cf. ~\cite{BBSj}), $\Pi_N(z,w)$ is a Bergman kernel of modulo $O(e^{-\delta N})$ for some $\delta>0$.
Combining with Lemma~\ref{lem5}, we obtain 
\[
\int_M (R_N(\cdot,y),\chi(y)u_N(y))_{h^N} dy=O(e^{-\delta \sqrt N})\|\chi u_N\|_{N\phi}.
\]
Substituting 
\[
u_N(y)=\chi(y)e^{-N\phi(x)/2}R_N(y,x)
\]
into the above equation, 
we get
\[
\int_M \chi(y)|R_N(x,y)|^2_{h^N,x} dV_M(y)=O(e^{-\delta \sqrt N})\sqrt{\int_M\chi^2|R_N(x,y)|_{h^N,x}^2 dV_M(y)},
\]
which implies 
\[
\int_M \chi(y)|R_N(x,y)|^2_{h^N} dV_M(y)=O(e^{-2\delta \sqrt N}).
\]
Note that the above is true uniformly for $x\in B_\epsilon$. Combining this with the Agmon estimate, we obtain
\begin{equation}\label{opernorm}
\sqrt{\int_{M\times M} |R_N(x,y)|^2_{h^N} dV_M(y)dV_M(x)}=O(e^{-\delta\sqrt N}).
\end{equation}
The above left hand side bounds  the norm of the operator $R_N$ defined by the integral kernel $R_N(z,y)$. 

Next we prove  that $R_N^2=R_N$, and hence  $R_N=0$. Recall our  convention of identifying points on $M$ with one of their lifts on $\tilde M$. 
Let $x,w\in M$. Then we have
\begin{align*}
&
\int_M(\Pi_N^\Gamma(x,y), \Pi_N^\Gamma(y,w))_{h^N(y)} e^{-\frac 12 N\phi(w)} dV_{M}(y)\\
&=\int_M\sum_{\gamma,\gamma_1\in\Gamma}
(\tilde\Pi_N(x,\gamma\cdot  y), \tilde\Pi_N(y,\gamma_1 \cdot w))_{h^N(y)} e^{-\frac 12 N\phi(w)} dV_{M}(y).
\end{align*}
Since  $\tilde \Pi(x,y)$ is $\Gamma$ invariant (cf. ~\eqref{invszego}) and since $\Gamma$ acts on $\tilde M$ by isometry, we have
\begin{align*}
&
\int_M(\Pi_N^\Gamma(x,y), \Pi_N^\Gamma(y,z))_{h^N(y)} e^{-\frac 12 N\phi(z)} dV_{M}(y)\\
&=\int_M\sum_{\gamma,\gamma_1\in\Gamma}
(\tilde\Pi_N(x,\gamma \cdot y), \tilde\Pi_N(\gamma\cdot  y,\gamma\gamma_1 \cdot w))_{h^N(y)} e^{-\frac 12 N\phi(w)} dV_{M}(y)\\
&=\int_M\sum_{\gamma,\gamma_1\in\Gamma}
(\tilde\Pi_N(x,\gamma \cdot y), \tilde\Pi_N(\gamma \cdot y,\gamma_1 \cdot w))_{h^N(y)} e^{-\frac 12 N\phi(w)} dV_{M}(y)\\
&=\int_{\tilde M}\sum_{\gamma_1\in\Gamma}
(\tilde\Pi_N(x, y), \tilde\Pi_N( y,\gamma_1 \cdot w))_{\tilde h^N(y)} e^{-\frac 12 N\phi(w)} dV_{\tilde M}(y).
\end{align*}

By the Agmon estimate, any $\tilde\Pi_N( y,\gamma_1\cdot  w) e^{-\frac 12 N\phi(w)}$ is an $L^2$ holomorphic section of $\tilde L^N$. Thus we have
\begin{align*}
&
\int_{\tilde M}\sum_{\gamma_1\in\Gamma}
(\tilde\Pi_N(x, y), \tilde\Pi_N( y,\gamma_1 \cdot w))_{h^N(y)} e^{-\frac 12 N\phi(w)} dV_{M}(y)\\
&
=\sum_{\gamma_1} \tilde\Pi_N(x,\gamma_1 \cdot w)e^{-\frac 12 N\phi(w)}=\Pi^\Gamma_N(x,w)e^{-\frac 12 N\phi(w)}.
\end{align*}
Let $\Pi_N^\Gamma$ be the operator corresponding to the kernel $\Pi_N^\Gamma(x,w)$, then the above computation shows that 
\[
(\Pi_N^\Gamma)^2=\Pi_N^\Gamma.
\]
Since $\Pi_N$ is a projection operator, we have
\[
\Pi_N\Pi_N^\Gamma=\Pi_N^\Gamma.
\]
Since both $\Pi_N$ and $\Pi_N^\Gamma$ are self-adjoint, the above also implies 
\[
\Pi_N^\Gamma\Pi_N=\Pi_N^\Gamma.
\]
As a result, we 
have  $R_N^2=R_N$. Thus  $R_N$ is a projection operator. The operator norm of $R_N$ is $1$ unless $R_N=0$. But by ~\eqref{opernorm}, the norm is less than one for sufficiently large $N$. Thus $R_N=0$ and the theorem is proved. 

\end{proof}

\section{\label{HC} Holomorphic convexity: Proof of Theorem \ref{N}}

We follow the notation of Napier \cite{N} to prove Therorem~\ref{N}.

\begin{proof} 
By the proof of Lemma~\ref{lem5}, the following result  is valid: let $\beta_1>0$ be a fixed positive number, then for $N$ sufficiently large,
\[
\sum_{\gamma\in\Gamma}e^{-\beta_1\sqrt Nd(x,\gamma \cdot y)}\leq C<\infty
\]
for some constant $C$ only depends on the distance $d(x,y)$ of $x,y\in \tilde M$. We shall use the fact below repeatly.

Let $\{y_j\}$ be a divergent sequence of $\tilde M$. By passing a subsequence if needed, we may assume that $\pi(y_j)\to x_0\in M$. 
 By passing a subsequence if needed again, we define the sequence $\{x_j\}$ inductively by the following conditions
\begin{enumerate}
\item for each $x_j$, there exists a $\gamma\in\Gamma$ such that $x_j=\gamma(x_0)$;
\item $d(x_j)\geq j$ for all $j\geq 1$;
\item $\inf_{i<j} d(x_i,x_j)\geq \frac 12 \sup_{i<j} d(x_i,x_j)$ for all $j\geq 1$;
\item $d(x_j,y_j)\to 0$, as $j\to\infty$.
\end{enumerate}

Define
\[
s(x)=\sum_{j=1}^\infty e^{d(x_j)} \tilde \Pi_N(x,x_j).
\]
 Here, as before, we fix a local trivialization of $L$ at $x$ so that $\tilde \Pi_N(x,x_j)$ can be identified as a section of $\tilde L^N$ for each $j$.  We claim that the above series is uniformly convergent on compact sets and hence defines a holomorphic section of $\tilde L^N$. To see this, we use the Agmon estimate to obtain
\[
|s(x)|_{\tilde h^N}\leq C\sum_{j=1}^\infty e^{d(x_j)-\beta\sqrt N d(x,x_j)}.
\]
On any compact set, the norm  can be estimate by
\[
|s(x)|_{\tilde h^N}\leq C\sum_{j=1}^\infty e^{-(\beta\sqrt N-1) d(x_j)}\leq C\sum_{j=1}^\infty e^{-(\beta\sqrt N-1) j}<\infty
\]
for a possibly  larger constant $C$. Thus section $s$ is well defined. 

We verify that $s(x_k)\to\infty$. In fact, using the Agmon's estimate and our construction of the sequence $\{x_j\}$, for any fixed $k$, we have
\[
\left|\sum_{\ell\neq k}e^{d(x_\ell)}\tilde\Pi_N(y_k,x_\ell)\right|_{\tilde h^N}\leq\sum_{\ell\neq k}e^{-(\frac 12\beta\sqrt N-1)d(x_\ell)}<C<\infty
\]
for a constant $C$ independent to $k$. On the other hand, by~\cite{SZ} and the Agmon estimate again, we know that 
\[
\left|\tilde \Pi_N(y_k,x_k)\right|_{\tilde h^N}\geq c e^{-d(x_k,y_k)}
\]
for some constant $c>0$. Thus we have
\[
e^{d(x_k)}\tilde\Pi_N(y_k,x_k)\to\infty
\]
and this completes the proof.

\end{proof}

We remark that such a section $s$ can never be in $ H^0_{L^2}(\tilde{M},
\tilde{L}^N)$.  Indeed,  we note that
$$s(z) = \int_{\tilde{M}} (\tilde{\Pi}_N(z, w), s(w))_{\tilde h^N} dV_{\tilde M}(w) $$
so that if $s$ were square integrable, then
$$|s(z)|_{\tilde h^N}^2 \leq  \int_{\tilde{M}} |\tilde{\Pi}_N(z, w)|^2  dV(w)  \cdot \|s\|_{L^2}^2. $$
We further note that
$$ \int_{\tilde{M}} |\tilde{\Pi}_N(z, w)|^2  dV_{\tilde M}(w) = \tilde{\Pi}_N(z,z).
$$
But  $\tilde{\Pi}_N(z,z)$ is
$\Gamma$ invariant and hence bounded. So square integrable holomorphic  sections are
automatically bounded and we get a contradiction.

\section{\label{SURJ} Application to surjectivity of  Poincar\'e series: Proof of Theorem~\ref{POINCARE}}
We now give a  simple proof of surjectivity when   Theorem~\ref{UPSZEGON}  and Theorem \ref{AGMON1}  are valid:

\begin{proof}

  We define
the  coherent state (or peak section) $\Phi^w_{N} \in H^0(M, L^N)$ centered at $w$ by 
$$\Phi_{N}^w (z) =\Pi_{N}(z,w). $$
 By Theorem \ref{UPSZEGON}, we have
\begin{equation} \Phi_{N}^w(z) = \sum_{\gamma \in \Gamma} \tilde{\Phi}^{\tilde{w}}_{N}( \gamma\cdot  z) 
= P  \tilde{\Phi}^{\tilde{w}}_{N} (z),\end{equation}
where 
$$\tilde \Phi_{N}^w (z) =\tilde\Pi_{N}(z,w). $$

For any $s\in H^0(M,L^N)$,
\[
\langle s, \Phi_{h^N}^w (z)  \rangle_{h^N} = s(z). 
\]
Therefore, 
\[
s(z)=P(\langle s,\tilde{\Phi}^{\tilde{w}}_{N} (z)\rangle)_{h^N}
\]
is written as the Poincar\'e series and the theorem is proved.
\end{proof}

\bibliographystyle{abbrv}

\end{document}